\documentclass[11pt]{amsart}
\usepackage{graphicx}
\usepackage{amssymb}
\usepackage{amsthm,enumitem}

\usepackage{fullpage}

\newcommand{\zz }{\mathbb{Z}}

\usepackage{hyperref}
\hypersetup{  pdfborder={0 0 0},
  colorlinks   = true,
  urlcolor     = blue,
  linkcolor    = blue,
  citecolor   = red}

\newcommand{\N}{\mathbb{N}}

\newcommand{\cN}{\mathcal{N}}
\newcommand{\tor}{\mathrm{Tor}}
\newcommand{\tl}{\mathrm{TorLen}}

\DeclareMathOperator{\im}{Im}

\newtheorem{thm}{Theorem}[section]
\newtheorem{prop}[thm]{Proposition}
\newtheorem{lemma}[thm]{Lemma}
\newtheorem{cor}[thm]{Corollary}

\theoremstyle{definition}
\newtheorem{defn}[thm]{Definition}

\newtheorem{constr}[thm]{Construction}

\theoremstyle{remark}

\newtheorem{rem}[thm]{Remark}

\begin{document}
\title{A note on torsion length and torsion subgroups}

\author{Ian J. Leary}
\author{Ashot Minasyan}
\address{CGTA, School of Mathematical Sciences,
University of Southampton, Highfield, Southampton, SO17~1BJ, United
Kingdom.}
\email{I.J.Leary@soton.ac.uk, aminasyan@gmail.com}

\begin{abstract}
Answering Questions 19.23 and 19.24 from the Kourovka notebook we construct polycyclic groups with arbitrary torsion lengths and give examples of finitely presented groups whose quotients by their torsion subgroups are not finitely presented.
\end{abstract}

\keywords{Torsion subgroups, torsion length, polycyclic groups}
\subjclass[2020]{20F99, 20F16}

\maketitle
\section{Introduction}
Given a group $G$, the \emph{torsion subgroup}, $\tor(G)$, is the normal subgroup of $G$ generated by all elements of finite order. The torsion length of $G$  is defined as follows. Let $G_0=G$ and $G_{n+1}=G_n/\tor(G_n)$, for $n \ge 0$. The  \emph{torsion length} of $G$,  $\tl(G)$, is the smallest $n \ge 0$ such that $G_n$ is torsion-free. If no such $n$ exists then we define $\tl(G)=\omega$.
Torsion length was defined and studied by Chiodo and Vyas in \cite{chvyone}, and in \cite{chvytwo} the same authors constructed finitely presented groups of arbitrary torsion length.  

Chiodo and Vyas asked in \cite[Questions 19.23 and 19.24]{kourovka} whether there exist finitely presented soluble groups of torsion length greater than $2$ and whether there exists a finitely presented group $G$ such that $G/\tor(G)$ is not finitely presented.  Versions of these questions also appeared in \cite[Subsection~3.3]{chvyone} and \cite[Question~1]{chvytwo}.  In this note we answer both questions affirmatively.

\subsection*{Acknowledgements} The authors thank the anonymous referee for suggesting changes which led to the concepts introduced in Subsections~\ref{subsec:neutral} and \ref{subsec:exact}.

\section{Polycyclic groups of arbitrarily large torsion length}
\subsection{Neutral homomorphisms and automorphisms of semidirect products} \label{subsec:neutral}
In this subsection we develop a few tools that will be used in the main construction.

\begin{defn} Let $K$ be a group and let $\beta:K \to K$ be an endomorphism. 
\begin{itemize}
    \item We define the map $\Delta_\beta:K \to K$ by $\Delta_\beta(k)=k^{-1}\beta(k)$, for all $k \in K$. 
    \item We denote by $\cN(\beta) \lhd K$  the normal closure of the subset $\Delta_\beta(K)=\{k^{-1}\beta(k) \mid k \in K\}$.
    \item If $C$ is another group and $\phi:K \to C$ is a homomorphism, we say that $\phi$ is \emph{$\beta$-neutral} if $\phi \circ \beta=\phi$, i.e., if $\phi(\beta(k))=\phi(k)$ for all $k \in K$.
    \end{itemize}
\end{defn}

The map $\Delta_\beta$ can be thought of as a (non-abelian) $1$-cocycle from $K$ to $K$, where $K$ acts on itself by conjugation (cf. \cite[Section~I.5.1]{Serre-Galois}).

\begin{lemma}\label{lem:basic_props_of_Delta} Suppose that $K$ is a group generated by a set $X$ and $ \beta \in \mathrm{End}(K)$. 
\begin{enumerate}
    \item[(i)] If  $\phi:K \to C$ is a group homomorphism then $\phi$ is $\beta$-neutral if and only if $\cN(\beta) \subseteq \ker\phi$ if and only if  $\Delta_\beta(X) \subseteq \ker\phi$.
    \item[(ii)] $\cN(\beta)$ is the normal closure in $K$ of the subset $\Delta_\beta(X)=\{x^{-1} \beta(x) \mid x \in X\}$.
\end{enumerate}
\end{lemma}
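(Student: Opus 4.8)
The plan is to prove part (ii) first and then deduce part (i) as a formal consequence, the key tool being the (non-abelian) cocycle identity satisfied by $\Delta_\beta$. A direct computation gives, for all $k,l\in K$,
\[
\Delta_\beta(kl)=(kl)^{-1}\beta(kl)=l^{-1}\bigl(k^{-1}\beta(k)\bigr)\beta(l)=\bigl(l^{-1}\Delta_\beta(k)l\bigr)\Delta_\beta(l),
\]
and similarly $\Delta_\beta(k^{-1})=k\,\Delta_\beta(k)^{-1}k^{-1}$, while $\Delta_\beta(1)=1$. These three identities are the only computational input required; they reflect exactly the cocycle structure alluded to in the remark preceding the lemma.

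For (ii), write $N$ for the normal closure in $K$ of $\Delta_\beta(X)$. Since $\Delta_\beta(X)\subseteq\Delta_\beta(K)$, the inclusion $N\subseteq\cN(\beta)$ is immediate, so the task is to show $\Delta_\beta(k)\in N$ for every $k\in K$, whereupon taking normal closures yields $\cN(\beta)\subseteq N$ and hence equality. I would consider the set $S=\{k\in K:\Delta_\beta(k)\in N\}$ and show it is a subgroup of $K$ containing $X$; since $X$ generates $K$, this forces $S=K$. Closure of $S$ under products follows from the cocycle identity together with the normality of $N$ (so that the conjugate $l^{-1}\Delta_\beta(k)l$ still lies in $N$), closure under inverses follows from the inverse formula and normality of $N$, membership of $1$ is clear, and $X\subseteq S$ holds by the very definition of $N$.

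For (i), the equivalence that $\phi$ is $\beta$-neutral if and only if $\cN(\beta)\subseteq\ker\phi$ is direct from the definitions: the condition $\phi(\beta(k))=\phi(k)$ for all $k$ is the same as $\Delta_\beta(k)\in\ker\phi$ for all $k$, i.e.\ $\Delta_\beta(K)\subseteq\ker\phi$, and since $\ker\phi$ is normal this is in turn equivalent to $\cN(\beta)\subseteq\ker\phi$. The remaining equivalence with the condition $\Delta_\beta(X)\subseteq\ker\phi$ then follows at once from (ii): using normality of $\ker\phi$ once more, $\Delta_\beta(X)\subseteq\ker\phi$ holds if and only if the normal closure $N$ of $\Delta_\beta(X)$ is contained in $\ker\phi$, and $N=\cN(\beta)$ by part (ii).

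The only place where any care is needed is the verification that $S$ is closed under multiplication and inversion; everything else is bookkeeping. The cocycle identity is precisely what makes these closure properties work, and it is the reason the relators $\Delta_\beta(x)$ with $x\in X$ suffice to generate $\cN(\beta)$ as a normal subgroup.
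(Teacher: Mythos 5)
Your proof is correct, but it runs in the opposite direction from the paper's and uses different machinery. You establish (ii) first by a direct computation: the cocycle identity $\Delta_\beta(kl)=\bigl(l^{-1}\Delta_\beta(k)l\bigr)\Delta_\beta(l)$ together with the inverse formula shows that $S=\{k\in K:\Delta_\beta(k)\in N\}$ is a subgroup containing $X$, hence all of $K$; part (i) then falls out formally. The paper instead proves (i) first, and does so without any cocycle computation: since $\beta$ is an endomorphism, $\phi\circ\beta$ is itself a homomorphism $K\to C$, so $\phi\circ\beta=\phi$ holds if and only if the two maps agree on the generating set $X$, which is exactly the condition $\Delta_\beta(X)\subseteq\ker\phi$. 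Part (ii) is then deduced from (i) by applying it to the quotient map $\phi:K\to K/N$, which is $\beta$-neutral by (i) and hence has $\cN(\beta)\subseteq\ker\phi=N$. The paper's route is shorter and leans on the universal-property-style observation about homomorphisms agreeing on generators; yours is more hands-on and has the merit of making explicit the $1$-cocycle structure of $\Delta_\beta$ that the paper only mentions in a passing remark. Both arguments are complete and the closure verifications you flag as the delicate step do go through exactly as you describe.
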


\begin{proof} Claim (i) follows from the observation that for any $k \in K$, $\phi(\beta(k))=\phi(k)$ is equivalent to $\phi(k^{-1}\beta(k))=1$ and the fact that two homomorphisms from $G$ to $C$ are equal if and only if they agree on $X$.

For claim (ii), let $C=K/N$, where $N$ is the normal closure of $\Delta_\beta(X)$ in $K$, and let $\phi:K \to C$ be the natural epimorphism. Then, according to claim (i), $\phi$ is $\beta$-neutral and $\cN(\beta) \subseteq \ker\phi=N$. Clearly $N \subseteq \cN(\beta)$, by definition, hence $\cN(\beta)=N$.
\end{proof}

\begin{defn}\label{def:extension_of_beta} Let $K, H$ be two groups, let $\beta \in \mathrm{Aut}(K)$ be an automorphism of $K$ and let $\phi:K \to \mathrm{Aut}(H)$ be a homomorphism. Denote by $A$ the semidirect product $H \rtimes_\phi K$. We define the \emph{extension of $\beta$ to $A$} as the map $\beta_A: A \to A$, given by 
\begin{equation*} 
\beta_{A}((h,k))=(h,\beta(k)), \text{ for all }h \in H \text{ and all } k \in K.
\end{equation*}
\end{defn}

In this paper we regard the semidirect product $A=H \rtimes_\phi K$ as the set $H \times K$, with the multiplication $(h_1,k_1)  (h_2,k_2)=(h_1 \phi(k_1)(h_2), k_1k_2)$, for all $(h_1,k_1),(h_2,k_2) \in A$.

\begin{lemma}\label{lem:bet_H-autom} Using the notation of Definition~\ref{def:extension_of_beta}, suppose that $\phi$ is $\beta$-neutral. Then the following properties hold:
\begin{enumerate}
    \item[(a)] the map $\beta_A$  is an automorphism of $A$ which has  the same order as $\beta$;
    \item[(b)] $\cN(\beta_A)=(1,\cN(\beta))$ in $A$;
    \item[(c)] the quotient $A/\cN(\beta_A)$ is naturally isomorphic to the semidirect product $H \rtimes_{\bar{\phi}} (K/\cN(\beta))$, where $\bar{\phi}: K/\cN(\beta) \to \mathrm{Aut}(H)$ is the homomorphism induced by $\phi$.
    \end{enumerate}
\end{lemma}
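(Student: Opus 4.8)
The plan is to treat the three claims in order, with the pivotal computation being the evaluation of $\Delta_{\beta_A}$ on a general element of $A$. For (a), I would first verify that $\beta_A$ is an endomorphism by expanding both sides of $\beta_A((h_1,k_1)(h_2,k_2)) = \beta_A((h_1,k_1))\,\beta_A((h_2,k_2))$ via the semidirect-product multiplication. The two sides agree in the $K$-coordinate automatically because $\beta$ is a homomorphism; in the $H$-coordinate they agree precisely because $\phi(k_1)=\phi(\beta(k_1))$, i.e.\ because $\phi$ is $\beta$-neutral. For bijectivity, I would observe that $\phi$ is also $\beta^{-1}$-neutral (apply $\phi\circ\beta=\phi$ to $\beta^{-1}(k)$), so $(\beta^{-1})_A$ is an endomorphism as well and is visibly a two-sided inverse of $\beta_A$. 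Finally $\beta_A^n((h,k)) = (h, \beta^n(k))$, so $\beta_A^n = \mathrm{id}_A$ iff $\beta^n = \mathrm{id}_K$, giving equality of orders.

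The heart of the argument is the identity $\Delta_{\beta_A}((h,k)) = (1, k^{-1}\beta(k)) = (1, \Delta_\beta(k))$. To obtain it I would compute $(h,k)^{-1} = (\phi(k^{-1})(h^{-1}), k^{-1})$ and multiply on the right by $\beta_A((h,k)) = (h, \beta(k))$; the $H$-coordinate collapses to $\phi(k^{-1})(h^{-1}h) = 1$, leaving only the $K$-coordinate $k^{-1}\beta(k)$. Consequently $\Delta_{\beta_A}(A) = (1, \Delta_\beta(K))$, so $\cN(\beta_A)$ is the normal closure of this set in $A$. One inclusion of (b) is then immediate: inside the subgroup $(1,K)\cong K$ conjugation by $(1,k')$ realises conjugation by $k'$ in $K$, so the normal closure in $A$ already contains all $K$-conjugates and products of the generators $(1,\Delta_\beta(k))$, hence all of $(1,\cN(\beta))$.

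I expect the main obstacle to be the reverse inclusion $\cN(\beta_A) \subseteq (1,\cN(\beta))$, which amounts to checking that $(1,\cN(\beta))$ is in fact normal in $A$ — a priori conjugation by an element $(h,k)$ could produce a nontrivial $H$-coordinate. This is exactly where $\beta$-neutrality is indispensable: by Lemma~\ref{lem:basic_props_of_Delta}(i) it yields $\cN(\beta) \subseteq \ker\phi$, so for $n \in \cN(\beta)$ one has $knk^{-1} \in \cN(\beta) \subseteq \ker\phi$, and the conjugate $(h,k)(1,n)(h,k)^{-1} = (h\,\phi(knk^{-1})(h^{-1}), knk^{-1})$ then simplifies to $(1, knk^{-1}) \in (1,\cN(\beta))$. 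Thus $(1,\cN(\beta))$ is normal and contains the generating set $\Delta_{\beta_A}(A)$, forcing $\cN(\beta_A) \subseteq (1,\cN(\beta))$ and completing (b).

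For (c), since $\cN(\beta) \subseteq \ker\phi$ the homomorphism $\phi$ factors through $K/\cN(\beta)$, giving a well-defined $\bar\phi$, and I would define a map $A \to H \rtimes_{\bar\phi}(K/\cN(\beta))$ by $(h,k) \mapsto (h, k\cN(\beta))$. A one-line expansion of the multiplication shows this is a surjective homomorphism, and its kernel is exactly $(1,\cN(\beta)) = \cN(\beta_A)$ by (b); the first isomorphism theorem then yields the desired natural identification.
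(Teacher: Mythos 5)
Your proposal is correct and follows essentially the same route as the paper: both parts hinge on showing that $\cN(\beta_A)$ is the normal closure of $(1,\Delta_\beta(K))$ and that $(1,\cN(\beta))$ is already normal in $A$ because $\cN(\beta)\subseteq\ker\phi$. The only cosmetic differences are that you compute $\Delta_{\beta_A}$ on every element of $A$ directly (the paper evaluates it just on the generating set $(H,1)\cup(1,K)$ via Lemma~\ref{lem:basic_props_of_Delta}(ii)) and that you verify normality by an explicit conjugation computation rather than the paper's generation argument.
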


\begin{proof}
(a) Evidently $\beta_A$ is a bijection with the same order as $\beta$. Consider any $(h_1,k_1),(h_2,k_2) \in A$, where $h_1,h_2 \in H$ and $k_1,k_2 \in K$, and observe that
\[\beta_A((h_1,k_1)(h_2,k_2))=\beta_A((h_1 \phi(k_1)(h_2),k_1k_2))=(h_1 \phi(k_1)(h_2),\beta(k_1)\beta(k_2)).\]
On the other hand,
\[\beta_A((h_1,k_1)) \beta_A((h_2,k_2))=(h_1,\beta(k_1))(h_2,\beta(k_2))=(h_1\phi(\beta(k_1))(h_2),\beta(k_1)\beta(k_2)).\]
Since $\phi$ is $\beta$-neutral, we see that $\beta_A((h_1,k_1)(h_2,k_2))=\beta_A((h_1,k_1)) \beta_A((h_2,k_2))$, hence $\beta_A$ is an automorphism of $A$. 

(b) By Lemma~\ref{lem:basic_props_of_Delta}(i), $\cN(\beta) \subseteq \ker\phi$, which means that the subgroup $(1,\cN(\beta))$ centralizes the subgroup $(H,1)$ in $A$. Since $\cN(\beta) \lhd K$ and $A$ is generated by $(H,1) \cup (1,K)$, it follows that $(1,\cN(\beta)) \lhd A$.
Now, according to Lemma~\ref{lem:basic_props_of_Delta}(ii), $\cN(\beta_A)$ is the normal closure in $A$ of the subset
\[\Delta_{\beta_A}((H,1)) \cup \Delta_{\beta_A}((1,K))=\{(1,1)\} \cup (1,\Delta_\beta(K))=(1,\Delta_\beta(K)).\] Therefore 
$\cN(\beta_A) \subseteq (1,\cN(\beta)) \subseteq \cN(\beta_A)$, i.e., $\cN(\beta_A)=(1,\cN(\beta))$.

Claim (c) is a consequence of claim (b) and the fact that $\cN(\beta) \subseteq \ker\phi$.
\end{proof}

\subsection{Exact automorphisms}\label{subsec:exact}
Throughout this subsection we assume that $F$ is a group and $\beta \in \mathrm{Aut}(F)$ is an automorphism of finite order $m \in \mathbb{N}$. We define the map $\Sigma_\beta:F \to F$ by
\[\Sigma_\beta(f)=f \beta(f) \dots \beta^{m-1}(f) , \text{ for all } f \in F. \]

\begin{rem} It is easy to check that the image $\im \Delta_\beta$ is always contained in the kernel $ \ker\Sigma_\beta$.
\end{rem}

The above remark naturally motivates the following terminology, which was originally proposed to us by the referee.

\begin{defn}\label{def:exact_autom} We will say that the automorphism $\beta \in \mathrm{Aut}(F)$ is \emph{exact} if $\im \Delta_\beta=\ker\Sigma_\beta$.
\end{defn}

For example, the order $2$ automorphism of $\mathbb{Z}^2$, which interchanges the standard generators, is exact. More generally,
if $F$ is a direct product of $n$ copies of a group $K$ and $\beta$ is an automorphism of $F$ cyclically permuting the factors then $\beta$ is exact. On the other hand, the automorphism of $\mathbb Z$ sending $1$ to $-1$ is not exact.

We will use exact automorphisms to control torsion in extensions by $\beta$.

\begin{lemma}\label{lem:exact->1cc} Let $G$ be a group with a torsion-free normal subgroup $F \lhd G$ and an element $x \in G$ of finite order $n \in \N$, such that $G=\langle F, x \rangle$. Let $\beta \in \mathrm{Aut}(F)$ denote the automorphism induced by conjugation by $x$. The following are equivalent:
\begin{itemize}
    \item[(i)] $\beta$ is an exact automorphism of $F$;
    \item[(ii)] if an element of the form $fx$, for some $f \in F$, has finite order in $G$ then it is conjugate to $x$ in $G$;
    \item[(iii)] $G$ contains exactly one conjugacy class of cyclic subgroups of order $n$.
\end{itemize}
\end{lemma}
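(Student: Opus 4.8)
The plan is to exploit the semidirect-product structure that the hypotheses force, and then to translate each of the three conditions into a statement about the subsets $\im\Delta_\beta$ and $\ker\Sigma_\beta$ of $F$. First I would record the structural consequences. Since $F$ is torsion-free and $x$ has finite order, $\langle x\rangle\cap F=\{1\}$ (any nontrivial $x^i\in F$ would be a torsion element of $F$), so $G=F\rtimes\langle x\rangle$ with $\langle x\rangle\cong\mathbb{Z}/n\mathbb{Z}$ and $G/F$ cyclic of order $n$. Moreover $\beta^n=\mathrm{id}$ because $x^n=1$, so the order $m$ of $\beta$ divides $n$ and $\Sigma_\beta$ is defined; write $n=mt$. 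A routine induction gives $(fx)^k=\bigl(f\,\beta(f)\cdots\beta^{k-1}(f)\bigr)x^k$, whence $(fx)^n=\Sigma_\beta(f)^t\in F$, using $\beta^m=\mathrm{id}$ to fold the product of $n$ terms into $t$ copies of $\Sigma_\beta(f)$. As $F$ is torsion-free, $fx$ has finite order if and only if $(fx)^n=1$ if and only if $\Sigma_\beta(f)=1$; and in that case $fx$ has order exactly $n$, since its image generates $G/F$. Thus the elements $fx$ of finite order are precisely those with $f\in\ker\Sigma_\beta$.

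Next I would compute the conjugacy class of $x$. As $G/F$ is abelian, every conjugate of $x$ lies in the coset $Fx$, and since powers of $x$ commute with $x$ it suffices to conjugate by elements of $F$; conjugating by $f'\in F$ gives $f'xf'^{-1}=f'\beta(f'^{-1})x=\Delta_\beta(f'^{-1})\,x$. As $f'$ ranges over $F$, the factor $\Delta_\beta(f'^{-1})$ ranges over all of $\im\Delta_\beta$, so the conjugacy class of $x$ is exactly $(\im\Delta_\beta)\,x$; hence $fx$ is conjugate to $x$ if and only if $f\in\im\Delta_\beta$. Combining this with the previous paragraph, condition (ii) says precisely that $\ker\Sigma_\beta\subseteq\im\Delta_\beta$. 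Together with the reverse inclusion $\im\Delta_\beta\subseteq\ker\Sigma_\beta$ recorded in the remark above, this is equivalent to $\im\Delta_\beta=\ker\Sigma_\beta$, that is, to exactness of $\beta$. This settles (i)$\Leftrightarrow$(ii).

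For (ii)$\Leftrightarrow$(iii) I would first reduce cyclic subgroups to a normal form. If $C\le G$ is cyclic of order $n$, then $C\cap F=\{1\}$ and $\lvert C\rvert=n=\lvert G/F\rvert$, so the quotient map restricts to an isomorphism $C\to G/F$; consequently $C$ contains a unique element mapping to the generator $xF$, necessarily of the form $fx$, and this element generates $C$. Thus the cyclic subgroups of order $n$ are exactly the $\langle fx\rangle$ with $f\in\ker\Sigma_\beta$, and (iii) amounts to saying that all of them are conjugate to $\langle x\rangle$. The implication (ii)$\Rightarrow$(iii) is immediate, since conjugate elements generate conjugate subgroups.

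The direction (iii)$\Rightarrow$(ii) is the step that I expect to be the main obstacle, because it has to bridge \emph{subgroup} conjugacy (which allows any generator) and \emph{element} conjugacy to $x$ specifically. The key observation is again that $G/F$ is abelian: if $g\langle fx\rangle g^{-1}=\langle x\rangle$, then $g(fx)g^{-1}$ is some generator $x^a$ of $\langle x\rangle$, yet it also lies in the coset $xF$, forcing $x^aF=xF$, hence $a\equiv 1\pmod n$ and $g(fx)g^{-1}=x$. Therefore $fx$ is conjugate to $x$, which is (ii). Assembling the three equivalences (i)$\Leftrightarrow$(ii) and (ii)$\Leftrightarrow$(iii) completes the proof.
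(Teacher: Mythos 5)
Your proof is correct and follows essentially the same route as the paper: the same computation $(fx)^n=\Sigma_\beta(f)^t$, the same conjugation identity identifying the conjugacy class of $x$ with $(\im\Delta_\beta)x$, and the same observation that each cyclic subgroup of order $n$ contains a unique generator of the form $fx$. The only cosmetic difference is that you package (i)$\Leftrightarrow$(ii) as a single set-theoretic identity $\ker\Sigma_\beta\subseteq\im\Delta_\beta$ and spell out the (ii)$\Leftrightarrow$(iii) step that the paper leaves as an easy exercise.
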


\begin{proof}  By the assumptions, $\beta$ must have order $m \in \N$ in $\mathrm{Aut}(F)$, where $n=ml$, for some $l \in \N$.
Therefore, for any $f \in F$ we have
\begin{equation}\label{eq:fx^n}
  (fx)^n=f (xfx^{-1})(x^2 f x^{-2}) \dots (x^{n-1} f x^{-n+1}) x^n=f \beta(f) \beta^2(f) \dots \beta^{n-1}(f)=(\Sigma_\beta(f))^l.
\end{equation}

Let us show that (i) implies (ii).
Suppose that $\beta$ is an exact automorphism of $F$ and $fx \in G$ is an element of finite order, for some $f \in F$. Equation \eqref{eq:fx^n} yields that $\Sigma_\beta(f)$ has finite order in $F$, and since $F$ is torsion-free we can conclude that $\Sigma_\beta(f)=1$. Hence $f=\Delta_\beta(h)$, for some $h \in F$, by exactness. Therefore
\[fx=h^{-1} \beta(h) x=h^{-1} x h x^{-1} x=h^{-1} x h,\]
so (ii) holds.

To show that (ii) implies (i), suppose that $f \in \ker \Sigma_\beta$ in $F$. Then $(fx)^n=1$ by \eqref{eq:fx^n}, so, in view of (ii), there must exist an element $g \in G$ such that $fx=g^{-1} x g$. Now, $g=x^t h$, for some $t \in \mathbb{Z}$ and some $h \in F$, because $G$ is generated by $F$ and $x$ and $F \lhd G$. Hence
\[fx=g^{-1} x g=h^{-1} x h=h^{-1} \beta(h) x,\]
so $f=\Delta_\beta(h)$. Thus $\beta \in \mathrm{Aut}(F)$ is exact, as required.

We have shown that (i) is equivalent to (ii). The equivalence between (ii) and (iii) is an easy consequence of the observation that every cyclic subgroup of order $n$ in $G$ contains precisely one element of the form $fx$, for $f \in F$, which will generate this subgroup.
\end{proof}

The equivalence between (i) and (iii) in the previous lemma gives the following corollary. 

\begin{cor} Suppose that $F$ is a torsion-free group with an
automorphism  $\beta$ of order $m \in \N$. If $\beta$ is exact then so is $\beta^p \in \mathrm{Aut}(F)$, for any integer $p$ coprime to $m$.
\end{cor}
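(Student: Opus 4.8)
The plan is to reduce everything to the intrinsic characterization (iii) of Lemma~\ref{lem:exact->1cc}, exploiting the fact that condition (iii) is a property of the ambient group $G$ alone and makes no reference to the particular finite-order generator $x$ used to realize the automorphism.

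First I would manufacture a group in which $\beta$ is realized by conjugation. Since $\beta$ has finite order $m$ and $F$ is torsion-free, form the semidirect product $G=F \rtimes \langle x\rangle$, where $x$ has order $m$ and conjugation by $x$ induces $\beta$ on $F$. Then the triple $(G,F,x)$ satisfies the hypotheses of Lemma~\ref{lem:exact->1cc} with $n=m$, so the equivalence (i)$\Leftrightarrow$(iii) tells us that $\beta$ is exact if and only if $G$ contains exactly one conjugacy class of cyclic subgroups of order $m$.

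The key observation is that $x^p$ can play the same role as $x$. Because $\gcd(p,m)=1$, the element $x^p$ again has order $m$ and generates the same cyclic group, so $\langle x^p\rangle=\langle x\rangle$ and hence $G=\langle F, x^p\rangle$; moreover conjugation by $x^p$ induces $\beta^p$, which likewise has order $m$. Thus the triple $(G,F,x^p)$ also satisfies the hypotheses of the Lemma with the same $n=m$, and (i)$\Leftrightarrow$(iii) now says that $\beta^p$ is exact if and only if $G$ contains exactly one conjugacy class of cyclic subgroups of order $m$ --- literally the same condition on $G$ as before. Therefore $\beta$ is exact if and only if $\beta^p$ is exact, and since $\beta$ is assumed exact the corollary follows.

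The only points needing care --- and the places where coprimality of $p$ and $m$ is genuinely used --- are the verifications that $x^p$ has order exactly $m$ (rather than a proper divisor) and that $\langle x^p\rangle=\langle x\rangle$, so that both the value of $n$ and the group $G$ are unchanged between the two applications of the Lemma. I do not anticipate a substantive obstacle beyond setting up $G$ correctly; the entire force of the argument is packaged into the equivalence (i)$\Leftrightarrow$(iii), whose virtue is that its right-hand side is an invariant of $G$ independent of the chosen finite-order generator.
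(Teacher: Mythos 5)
Your proposal is correct and is essentially identical to the paper's own proof: both form $G=F\rtimes\langle\beta\rangle$, note that $x^p$ has order $m$ and generates $G$ together with $F$, and transfer exactness through the conjugacy-class condition (iii) of Lemma~\ref{lem:exact->1cc}. No issues.
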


\begin{proof} Let $G=F \rtimes \langle \beta \rangle$ be the natural semidirect product of $F$ with $\langle \beta \rangle \cong C_m$. We identify $F$ with its image in $G$ and let $x \in G$ denote the image of $\beta$. Then $F \lhd G$, $G=\langle F ,x \rangle$  and $xfx^{-1}=\beta(f)$, for all $f \in F$. 

Take any $p \in \mathbb{Z}$, coprime to $m$. Since $x$ has order $m$ in $G$, $x^p$ also has order $m$, so $G=\langle F,x^p \rangle$ and conjugation by $x^p$ induces the automorphism $\beta^p$ on $F$. If $\beta \in \mathrm{Aut}(F)$ is exact then $G$ has exactly one conjugacy class of cyclic subgroups of order $m$, by Lemma~\ref{lem:exact->1cc}. Therefore, the same lemma shows that $\beta^p\in \mathrm{Aut}(F)$ is also exact.
\end{proof}

\begin{rem}
The results of this subsection can be expressed in terms of the first 
non-abelian cohomology group of the cyclic group $C_n$ with coefficients in a group $F$, which 
we compute using the presentation 2-complex associated to $\langle \beta \mid \beta^n\rangle$ rather than 
the general approach described in~\cite[Section~I.5.1]{Serre-Galois}.  In this setting, the kernel of $\Sigma_\beta$ 
is the set of 1-cocycles, and the image of $\Delta_\beta$ is the 1-coboundaries.  The automorphism $\beta$ of 
$F$ is exact if and only if the pointed set $H^1(C_n;F)$ has just one element, and Lemma~\ref{lem:exact->1cc} 
is closely related to~\cite[Exercise~1 in Section~I.5.1, ]{Serre-Galois}.  
\end{rem}

The final lemma in this subsection verifies that extensions of exact automorphisms to semidirect products, as discussed in Subsection~\ref{subsec:neutral}, are again exact.

\begin{lemma}\label{lem:extension_is_exact} Let $K,H$ be groups, let $\beta \in \mathrm{Aut}(K)$ be a finite order automorphism. Suppose that $\phi: K \to \mathrm{Aut}(H)$ is a $\beta$-neutral homomorphism  and $A=H \rtimes_\phi K$ is the corresponding semidirect product. If $H$ is torsion-free and $\beta$ is an exact automorphism of $K$ then its extension $\beta_A$, given by Definition~\ref{def:extension_of_beta}, is an exact automorphism of $A$. 
\end{lemma}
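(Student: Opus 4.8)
The plan is to compute the two maps $\Delta_{\beta_A}$ and $\Sigma_{\beta_A}$ on $A=H\rtimes_\phi K$ explicitly and compare their image and kernel, bringing in the three hypotheses (torsion-freeness of $H$, exactness of $\beta$, and $\beta$-neutrality of $\phi$) one at a time. First I would invoke Lemma~\ref{lem:bet_H-autom}(a) to record that $\beta_A$ is an automorphism of $A$ of the same finite order $m$ as $\beta$; this guarantees that $\Sigma_{\beta_A}$ is formed as the product over the same number $m$ of iterates that defines $\Sigma_\beta$. The inclusion $\im\Delta_{\beta_A}\subseteq\ker\Sigma_{\beta_A}$ holds automatically by the Remark preceding Definition~\ref{def:exact_autom}, so only the reverse inclusion needs to be proved.

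A direct calculation with the semidirect-product multiplication gives $\Delta_{\beta_A}((h,k))=(h,k)^{-1}(h,\beta(k))=(1,\Delta_\beta(k))$, the $H$-coordinate cancelling for every $(h,k)$; hence $\im\Delta_{\beta_A}=(1,\im\Delta_\beta)$ inside $A$. Next I would multiply out $\Sigma_{\beta_A}((h,k))=\prod_{j=0}^{m-1}(h,\beta^j(k))$. The $K$-coordinate of the result is plainly $\Sigma_\beta(k)$, while the $H$-coordinate is a product of conjugates of $h$ whose $j$-th factor is $\phi(\beta^0(k)\cdots\beta^{j-1}(k))(h)$. Here I would apply $\beta$-neutrality, $\phi\circ\beta=\phi$, to replace each $\phi(\beta^i(k))$ by $\phi(k)$; as $\phi$ is a homomorphism this collapses the $H$-coordinate to $\prod_{j=0}^{m-1}\phi(k)^j(h)$, so that $\Sigma_{\beta_A}((h,k))=\bigl(\prod_{j=0}^{m-1}\phi(k)^j(h),\,\Sigma_\beta(k)\bigr)$.

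Now take $(h,k)\in\ker\Sigma_{\beta_A}$. The $K$-coordinate forces $k\in\ker\Sigma_\beta$, and exactness of $\beta$ then gives $k\in\im\Delta_\beta$. The key step is to feed this back into the $H$-coordinate: since $\im\Delta_\beta\subseteq\cN(\beta)$ by definition and $\cN(\beta)\subseteq\ker\phi$ by Lemma~\ref{lem:basic_props_of_Delta}(i) (using that $\phi$ is $\beta$-neutral), we obtain $\phi(k)=\mathrm{id}_H$. Consequently the $H$-coordinate of $\Sigma_{\beta_A}((h,k))$ degenerates to $h^m$, and the hypothesis $(h,k)\in\ker\Sigma_{\beta_A}$ forces $h^m=1$. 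As $H$ is torsion-free and $m\ge 1$, this yields $h=1$, whence $(h,k)=(1,k)\in(1,\im\Delta_\beta)=\im\Delta_{\beta_A}$. This proves $\ker\Sigma_{\beta_A}\subseteq\im\Delta_{\beta_A}$ and therefore the exactness of $\beta_A$.

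I expect the main obstacle to be organising the $H$-coordinate of $\Sigma_{\beta_A}$ and noticing that $\beta$-neutrality does double duty: it first simplifies that coordinate to a product of powers of the single automorphism $\phi(k)$, and then --- once $k$ is known to lie in $\im\Delta_\beta$ --- forces $\phi(k)$ to be trivial, after which torsion-freeness of $H$ closes the argument.
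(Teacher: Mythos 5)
Your proposal is correct and follows essentially the same route as the paper: compute $\Sigma_{\beta_A}$ coordinatewise, use $\beta$-neutrality to collapse the $H$-coordinate, apply exactness of $\beta$ to the $K$-coordinate, deduce $\phi(k)=\mathrm{id}_H$ via Lemma~\ref{lem:basic_props_of_Delta}(i), and finish with torsion-freeness of $H$. The only cosmetic difference is that you compute $\im\Delta_{\beta_A}=(1,\im\Delta_\beta)$ up front, while the paper exhibits the required preimage $(1,u)$ explicitly at the end.
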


\begin{proof}
Suppose that $\beta$ has order $m \in \N$ in $\mathrm{Aut}(K)$. Then $\beta_A \in \mathrm{Aut}(A)$ and it has the same order $m$, by Lemma~\ref{lem:bet_H-autom}(a). Observe that for any $h \in H$ and $k \in K$ we have
\begin{equation} \label{eq:sigma_beta_A}
\begin{aligned}
\Sigma_{\beta_A}\Bigl((h,k)\Bigr) &=\Bigl(h,k\Bigr) \Bigl(h,\beta(k)\Bigr) \dots \Bigl(h,\beta^{m-1}(k)\Bigr)\\
&=\Bigl(h\cdot \phi\bigl(k\bigr)(h)\cdot \phi\bigl(k\beta(k)\bigr)(h) \cdots \phi\bigl(k \beta(k) \dots \beta^{m-2}(k)\bigr)(h),k \beta(k) \dots \beta^{m-1}(k)\Bigr)\\
&=\Bigl(h\cdot \phi\bigl(k\bigr)(h)\cdot \phi\bigl(k^2\bigr)(h) \cdots \phi\bigl(k^{m-1}\bigr)(h),\Sigma_\beta(k)\Bigr),
\end{aligned}\end{equation} where in the last equality we used the fact that $\phi$ is  $\beta$-neutral. 

Now, assume that $\Sigma_{\beta_A}((h,k))=(1,1)$. Then $\Sigma_\beta(k)=1$ by \eqref{eq:sigma_beta_A}, so $k=\Delta_\beta(u)$, for some $u \in K$, by exactness. Lemma~\ref{lem:basic_props_of_Delta}(i) implies that $\phi(k)=\mathrm{Id}_H$, so \eqref{eq:sigma_beta_A} shows that
\[\Sigma_{\beta_A} \bigl((h,k)\bigr)=\bigl(h^{m},1)=(1,1).\]
Thus $h^m=1$, whence $h=1$ as $H$ is torsion-free. Consequently, \[(h,k)=(1,u^{-1}\beta(u))=(1,u)^{-1}\,\beta_A((1,u)) \in \Delta_{\beta_A}(A).\]
Therefore $\ker\Sigma_{\beta_A}=\Delta_{\beta_A}(A)$, as required.
\end{proof}

\subsection{The main construction}


The building block for our examples will be the group $K$, given by the presentation
\begin{equation}\label{eq:def_of_K}
K=\langle a,s,b,t \mid a^s=a^{-1},~b^t=b^{-1},~[a,b]=[a,t]=[s,b]=[s,t]=1\rangle,
\end{equation}
where we write $a^s=sas^{-1}$ and $[a,b]=aba^{-1}b^{-1}$.

Clearly $K \cong \pi \times \pi$, where $\pi=\langle a,s \mid a^s=a^{-1} \rangle$ is the fundamental group of the Klein bottle. In particular, $K$ is torsion-free, metabelian and polycyclic of Hirsch length $4$.

We define $\beta \in \mathrm{Aut}(K)$ to be the order $2$ automorphism interchanging the two copies of $\pi$, i.e., $\beta(a)=b$, $\beta(s)=t$, $\beta(b)=a$ and $\beta(t)=s$.

Let $C_2$ be the cyclic group of order $2$, generated by an element $\alpha$, and let $\phi:K \to C_2$ be the homomorphism sending $a$ and $b$ to $\alpha$ and $s$ and $t$ to $1$.
Verification of the  following elementary properties is left for the reader (Lemma~\ref{lem:basic_props_of_Delta}(ii) may be useful for claim (ii)).
\begin{lemma}\label{lem:props_of_K} The group $K$, its automorphism $\beta$ and the homomorphism $\phi$ enjoy the following properties.
\begin{itemize}
  \item[(i)] $\phi \circ \beta=\phi$, i.e., $\phi$ is $\beta$-neutral.
  \item[(ii)] The quotient $K/\cN(\beta)$ is isomorphic to $C_2 \times C_\infty $, where $C_2$ is generated by the image of $a$ and $C_\infty$ is generated by the image of $s$.
  \item[(iii)] The automorphism $\beta$ of $K$ is exact.
  \end{itemize}
\end{lemma}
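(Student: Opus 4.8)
The plan is to verify the three claims in turn, treating (i) and (iii) as short direct computations and reserving most of the care for the presentation manipulation in (ii).

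For claim (i), since $K$ is generated by $X=\{a,s,b,t\}$ and two homomorphisms agree if and only if they agree on generators, it suffices to check $\phi(\beta(x))=\phi(x)$ for each $x\in X$. I would simply evaluate $\phi(\beta(a))=\phi(b)=\alpha=\phi(a)$ and $\phi(\beta(s))=\phi(t)=1=\phi(s)$, together with the symmetric computations $\phi(\beta(b))=\phi(a)=\alpha=\phi(b)$ and $\phi(\beta(t))=\phi(s)=1=\phi(t)$. This gives $\phi\circ\beta=\phi$.

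For claim (ii), I would invoke Lemma~\ref{lem:basic_props_of_Delta}(ii), so that $\cN(\beta)$ is the normal closure of $\Delta_\beta(X)$. Computing on generators gives $\Delta_\beta(a)=a^{-1}b$, $\Delta_\beta(s)=s^{-1}t$, $\Delta_\beta(b)=b^{-1}a=(a^{-1}b)^{-1}$ and $\Delta_\beta(t)=t^{-1}s=(s^{-1}t)^{-1}$, so $\cN(\beta)$ is normally generated by just $a^{-1}b$ and $s^{-1}t$. Hence $K/\cN(\beta)$ is presented by the defining relations of $K$ together with $a=b$ and $s=t$. I would then run Tietze transformations eliminating the generators $b$ and $t$: the relation $b^t=b^{-1}$ becomes a duplicate of $a^s=a^{-1}$, the commutators $[a,b]$ and $[s,t]$ become trivial, and $[a,t]$ and $[s,b]$ both collapse to $[a,s]=1$. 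This leaves $\langle a,s\mid a^s=a^{-1},\,[a,s]=1\rangle$. Combining $sas^{-1}=a^{-1}$ with $sas^{-1}=a$ forces $a^2=1$, after which $a^s=a^{-1}$ becomes redundant, yielding $\langle a,s\mid a^2=1,\,[a,s]=1\rangle\cong C_2\times C_\infty$, with $C_2$ generated by the image of $a$ and $C_\infty$ by the image of $s$. The one point needing attention is to confirm that no further collapse occurs, i.e.\ that the simplified presentation really is that of $C_2\times C_\infty$ and not a proper quotient; this is where I expect the bookkeeping to be most delicate, though it remains routine.

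For claim (iii), I would exploit the product decomposition $K\cong\pi\times\pi$ and write a general element as a pair $(u,v)$ with $u,v\in\pi$, so that $\beta((u,v))=(v,u)$ and $\beta$ has order $m=2$. Then $\Sigma_\beta((u,v))=(u,v)(v,u)=(uv,vu)$, so $\ker\Sigma_\beta=\{(u,u^{-1})\mid u\in\pi\}$, while $\Delta_\beta((u,v))=(u^{-1},v^{-1})(v,u)=(u^{-1}v,\,v^{-1}u)=(w,w^{-1})$ where $w=u^{-1}v$. As $u,v$ range over $\pi$ the element $w$ ranges over all of $\pi$, so $\im\Delta_\beta=\{(w,w^{-1})\mid w\in\pi\}=\ker\Sigma_\beta$, proving exactness. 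This is precisely the $n=2$ instance of the cyclically-permuting-factors example noted earlier; the computation is self-contained, but alternatively one could simply cite that general observation.
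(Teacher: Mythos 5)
Your proposal is correct and follows exactly the route the paper intends: the paper leaves the verification to the reader with the hint to use Lemma~\ref{lem:basic_props_of_Delta}(ii) for claim (ii), which you do, and your argument for (iii) is the $n=2$ case of the paper's own example of an automorphism cyclically permuting direct factors. All three computations check out, including the key step where $a^s=a^{-1}$ combines with $[a,s]=1$ to force $a^2=1$ in the quotient.
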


We now construct auxiliary torsion-free polycyclic groups $H_n$ as iterated semidirect products of $K$ with itself $n$ times. These extensions will be defined by induction on $n$.
\begin{constr}\label{constr:H_n}
Set $H_0=\{1\}$ and let $\beta_0 \in \mathrm{Aut}(H_0)$ be the identity automorphism; set $H_1=K$ and let $\beta_1 \in \mathrm{Aut}(H_1)$ be the automorphism $\beta$ defined above. Now suppose that the group $H_{n-1}$ and an order $2$ automorphism $\beta_{n-1} \in \mathrm{Aut}(H_{n-1})$ have already been constructed, for some $n \ge 2$.
We define the group $H_{n}$ as the semidirect product
\begin{equation*} 
H_{n}=H_{n-1} \rtimes_{\phi_{n-1}} K,
\end{equation*}
where $\phi_{n-1}:K \to \mathrm{Aut}(H_{n-1})$ is essentially the homomorphism $\phi$ defined above, whose image is the cyclic subgroup $\langle \beta_{n-1} \rangle$.

By Lemma~\ref{lem:props_of_K}(i), $\phi_{n-1}$ is $\beta$-neutral, so we can use Definition~\ref{def:extension_of_beta} and Lemma~\ref{lem:bet_H-autom} to define an automorphism $\beta_{n}\in \mathrm{Aut}(H_n)$ as the extension $\beta_{H_n}$, of $\beta$ to $H_n$. Thus
\begin{equation*} 
\beta_{n}((h,k))=(h,\beta(k)), \text{ for all }h \in H_{n-1} \text{ and all } k \in K.
\end{equation*}
\end{constr}

\begin{prop} \label{prop:props_of_H_n+1} For all  $n \in \N$, the group $H_n$ satisfies the following properties:
\begin{enumerate}
\item[(i)]  the quotient $H_{n}/\cN(\beta_n)$ is isomorphic to $(H_{n-1} \rtimes \langle \beta_{n-1} \rangle) \times C_\infty$, where $\langle \beta_{n-1} \rangle \leqslant \mathrm{Aut}(H_{n-1})$ is the cyclic subgroup of order $2$ generated by $\beta_{n-1}$;
    \item[(ii)]  $\beta_n$ is an exact automorphism of $H_n$.
\end{enumerate}
\end{prop}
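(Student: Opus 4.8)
The plan is to argue by induction on $n$. The substantive work has already been carried out in Lemmas~\ref{lem:bet_H-autom}, \ref{lem:extension_is_exact} and~\ref{lem:props_of_K}; the proof of the proposition simply stitches these together along the inductive construction of $H_n$, with one auxiliary observation about torsion-freeness. The base case is $n=1$: here $H_1=K$ and $\beta_1=\beta$, so claim~(i) is exactly Lemma~\ref{lem:props_of_K}(ii), giving $H_1/\cN(\beta_1)\cong C_2\times C_\infty$, and claim~(ii) is Lemma~\ref{lem:props_of_K}(iii). Note that the general shape of~(i) is really intended for $n\ge 2$, since for $n=1$ the factor $H_0\rtimes\langle\beta_0\rangle$ is trivial; the base case is therefore verified directly rather than from the formula. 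For the inductive step I fix $n\ge 2$ and assume the proposition, together with torsion-freeness of $H_{n-1}$, for the index $n-1$.

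First I would record that each $H_n$ is torsion-free, by a separate and elementary induction that uses neither claim of the proposition. Let $N=\ker\phi_{n-1}\lhd K$; since $N$ acts trivially on $H_{n-1}$, the subgroup $\{(h,k):k\in N\}$ equals the direct product $H_{n-1}\times N$, which is torsion-free as a product of torsion-free groups and has index $[K:N]=|\im\phi_{n-1}|=2$ in $H_n$. Since $H_{n-1}\times N$ is torsion-free, any nontrivial torsion element $(h,k)\in H_n$ must lie outside it and hence satisfy $(h,k)^2\in H_{n-1}\times N$; as $(h,k)^2$ is again torsion, $(h,k)^2=1$. But $(h,k)^2=(h\,\phi_{n-1}(k)(h),k^2)$, which forces $k^2=1$ and thus $k=1$ because $K$ is torsion-free, contradicting $k\notin N$. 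Hence $H_n$ has no nontrivial torsion.

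For claim~(i) I would apply Lemma~\ref{lem:bet_H-autom}(c) with $A=H_n$, $H=H_{n-1}$ and the automorphism $\beta$ of $K$, obtaining $H_n/\cN(\beta_n)\cong H_{n-1}\rtimes_{\bar\phi_{n-1}}(K/\cN(\beta))$. By Lemma~\ref{lem:props_of_K}(ii) we have $K/\cN(\beta)\cong C_2\times C_\infty$, with $C_2$ generated by the image of $a$ and $C_\infty$ by the image of $s$. Since $\phi_{n-1}$ sends $a$ to the generator $\beta_{n-1}$ of its image and sends $s$ to $\mathrm{Id}_{H_{n-1}}$, the induced action $\bar\phi_{n-1}$ has its $C_2$-factor acting through $\beta_{n-1}$ and its $C_\infty$-factor acting trivially. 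Splitting off the trivially acting direct factor $C_\infty$ then identifies the semidirect product with $(H_{n-1}\rtimes\langle\beta_{n-1}\rangle)\times C_\infty$, as required.

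Finally, claim~(ii) follows from Lemma~\ref{lem:extension_is_exact} applied to $A=H_n=H_{n-1}\rtimes_{\phi_{n-1}}K$: the hypotheses hold because $H_{n-1}$ is torsion-free (established above), $\beta$ is an exact automorphism of $K$ by Lemma~\ref{lem:props_of_K}(iii), and $\phi_{n-1}$ is $\beta$-neutral by Lemma~\ref{lem:props_of_K}(i); the lemma then yields that $\beta_n=\beta_{H_n}$ is exact. I expect the main obstacle to be organisational rather than conceptual: one must confirm that $\phi_{n-1}$ really is the map $\phi$ of Lemma~\ref{lem:props_of_K} under the identification of $\im\phi_{n-1}$ with $\langle\beta_{n-1}\rangle$, so that all three lemmas apply verbatim, and one must keep the base-case discrepancy in~(i) in mind so that the induction is anchored at $n=1$ correctly.
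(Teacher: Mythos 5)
Your proof is correct and follows essentially the same route as the paper's: the base case $n=1$ from Lemma~\ref{lem:props_of_K}, claim~(i) for $n\ge 2$ from Lemma~\ref{lem:bet_H-autom}(c) together with the definition of $\phi_{n-1}$, and claim~(ii) from Lemma~\ref{lem:extension_is_exact}. The only difference is that you spell out details the paper leaves implicit, notably the torsion-freeness of $H_{n-1}$ needed for Lemma~\ref{lem:extension_is_exact} (which also follows more directly from the fact that an extension of a torsion-free group by a torsion-free group is torsion-free) and the splitting-off of the trivially acting $C_\infty$ factor in~(i).
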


\begin{proof} When $n=1$ the claims follow from Lemma~\ref{lem:props_of_K}. For $n\ge 2$, (i) is an easy consequence of Lemma~\ref{lem:bet_H-autom}(c) and the definition of $\phi_{n-1}$, and claim (ii) follows from Lemma~\ref{lem:extension_is_exact}.
\end{proof}

\begin{thm} \label{thm:tor_length_n} For every integer $n \ge 1$ there exists a polycyclic group $G_n$ of torsion length $n$.
\end{thm}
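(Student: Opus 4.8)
The plan is to realise torsion length $n$ by the group $G_n := H_{n-1} \rtimes \langle \beta_{n-1}\rangle$, the semidirect product in which a generator $\alpha$ of a cyclic group of order $2$ acts on $H_{n-1}$ through $\beta_{n-1}$ (the degenerate case $n=1$ being the base group $G_1 = C_2$, consistent with reading $H_0 \rtimes \langle\beta_0\rangle$ as $C_2$, which is exactly what makes Proposition~\ref{prop:props_of_H_n+1}(i) hold at its own base). Each $H_m$ is an iterated semidirect product of copies of the polycyclic group $K$, hence polycyclic, and an extension of a polycyclic group by the finite group $C_2$ is again polycyclic, so every $G_n$ is polycyclic. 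Each $H_m$ is also torsion-free: $K$ is torsion-free, and an extension of a torsion-free group by a torsion-free group is torsion-free, since the image of any finite-order element in the torsion-free quotient is trivial and hence the element lies in the torsion-free kernel; torsion-freeness therefore propagates through the induction defining the $H_m$.

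The heart of the argument is a single step of the torsion-length recursion. Write $G = H \rtimes \langle \alpha \rangle$ with $H$ torsion-free and $\alpha$ of order $2$ acting by an exact automorphism $\beta$ of $H$. As $H$ is torsion-free, its only element of finite order is the identity, so every nontrivial torsion element of $G$ has the form $h\alpha$; by Lemma~\ref{lem:exact->1cc} applied to $G = \langle H, \alpha\rangle$, exactness of $\beta$ forces every such element of finite order to be conjugate to $\alpha$. Hence $\tor(G)$ is the normal closure of $\alpha$. Conjugating $\alpha$ by $h^{-1} \in H$ yields $\Delta_\beta(h)\,\alpha$, so this normal closure contains $\Delta_\beta(h)$ for every $h$ and therefore all of $\cN(\beta)$; since $\cN(\beta) \rtimes \langle\alpha\rangle$ is already a normal subgroup of $G$ containing $\alpha$, we conclude that $\tor(G) = \cN(\beta)\rtimes\langle\alpha\rangle$ and consequently $G/\tor(G) \cong H/\cN(\beta)$.

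Applying this with $H = H_{n-1}$ and $\beta = \beta_{n-1}$, which is exact by Proposition~\ref{prop:props_of_H_n+1}(ii), gives $G_n/\tor(G_n) \cong H_{n-1}/\cN(\beta_{n-1})$, which by Proposition~\ref{prop:props_of_H_n+1}(i) is isomorphic to $(H_{n-2}\rtimes\langle\beta_{n-2}\rangle)\times C_\infty = G_{n-1} \times C_\infty$. The remaining point is that a torsion-free direct factor is invisible to torsion length: since $\tor(A \times C_\infty) = \tor(A) \times \{1\}$, one has $(A\times C_\infty)/\tor(A\times C_\infty) \cong (A/\tor(A)) \times C_\infty$, and iterating shows $\tl(A \times C_\infty) = \tl(A)$. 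Thus $\tl(G_n) = 1 + \tl(G_{n-1})$ for $n \ge 2$, while the base case $\tl(G_1) = \tl(C_2) = 1$ is immediate, so induction yields $\tl(G_n) = n$. The main obstacle is the middle step: everything hinges on showing that exactness of $\beta_{n-1}$ collapses all of the torsion of $G_n$ into the single normal closure $\cN(\beta_{n-1})\rtimes\langle\alpha\rangle$, for which Lemma~\ref{lem:exact->1cc} and Proposition~\ref{prop:props_of_H_n+1}(ii) are precisely the tools needed; the bookkeeping with the $C_\infty$ factor and the degenerate base case $n=1$ is routine but must be tracked carefully to land on torsion length exactly $n$.
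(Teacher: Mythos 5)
Your proposal is correct and follows essentially the same route as the paper: identify $\mathrm{Tor}(G_n)$ as the normal closure of the order-$2$ element via exactness of $\beta_{n-1}$ and Lemma~\ref{lem:exact->1cc}, recognise the quotient as $G_{n-1}\times C_\infty$ via Proposition~\ref{prop:props_of_H_n+1}(i), note that the $C_\infty$ factor does not change torsion length, and induct. The only differences are an index shift (the paper writes the same group as $G_{n+1}=H_n\rtimes\langle\beta_n\rangle$) and that you spell out in more detail the identifications $\mathrm{Tor}(G)=\cN(\beta)\rtimes\langle\alpha\rangle$ and $\tl(A\times C_\infty)=\tl(A)$, which the paper leaves as brief assertions.
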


\begin{proof} For each $n \ge 0$ we define the group $G_{n+1}$ as follows:
\begin{equation}\label{eq:G_n+1}
G_{n+1}=\langle H_n, x \mid x^2=1,~x g x^{-1}=\beta_n(g),\text{ for all } g \in H_n\rangle \cong H_n \rtimes \langle \beta_n\rangle,
\end{equation}
where $H_n$ and $\beta_n \in \mathrm{Aut}(H_n)$ are provided by Construction~\ref{constr:H_n}. Thus $H_n$ is a subgroup of $G_{n+1}$ of index $2$. Note that $H_n$ is a polycyclic group of Hirsch length $4n$, by definition, hence so is $G_{n+1}$.

Let us show that the torsion length of $G_{n+1}$ is $n+1$ by induction on $n$. When $n=0$, $G_1 \cong C_2$ has torsion length $1$, thus we can further assume that $n \ge 1$.

Recall that $H_n$ is torsion-free and $\beta_n \in \mathrm{Aut}(H_n)$ is an exact automorphism by Proposition~\ref{prop:props_of_H_n+1}. Therefore, according to   Lemma~\ref{lem:exact->1cc}, every finite order element of $G_{n+1}$ must be conjugate to a power of $x$. 
Thus $\mathrm{Tor}(G_{n+1})$ is the normal closure of $x$ in $G_{n+1}$. The definition of $G_{n+1}$, given in \eqref{eq:G_n+1}, implies that the quotient $G_{n+1}/\mathrm{Tor}(G_{n+1})$ is naturally isomorphic to the quotient of $H_n$ by $\cN(\beta_n)$. So, Proposition~\ref{prop:props_of_H_n+1}.(i) yields that
\[G_{n+1}/\mathrm{Tor}(G_{n+1}) \cong (H_{n-1} \rtimes \langle \beta_{n-1} \rangle) \times C_\infty \cong G_n \times C_\infty.\]
Evidently, the torsion length  of $G_n \times C_\infty$ is the same as the torsion length of $G_n$, which equals $n$, by induction. Therefore the torsion length of $G_{n+1}$ is $n+1$, as claimed.
\end{proof}

\begin{rem} It is not difficult to show that the groups $H_n$ and $G_n$, constructed above, are, in fact, virtually abelian. A more careful analysis reveals that for every $n \ge 1$ the group $G_n$ contains a free abelian normal subgroup $F_n$ such that $G_n/F_n $ is isomorphic to the direct power $(C_2 \wr C_2)^{n-1}$. It follows that, for each $n \in \N$, $G_n$ is soluble of derived length $3$, hence the (restricted) direct product $\displaystyle \times_{n=1}^\infty G_n$ is a countable soluble group of derived length $3$ with infinite torsion length. 
\end{rem}

\begin{rem} \label{rem:test} It is easy to see that the torsion length of a polycyclic group $G$ is at most $h(G)+1$, where $h(G)$ denotes the Hirsch length  of $G$. In particular, the torsion length of any polycyclic group is finite.  
\end{rem}

In \cite[Subsection 3.3]{chvyone} Chiodo and Vyas asked whether there exists a finitely generated soluble group  with infinite torsion length. This interesting question remains open. As Remark~\ref{rem:test} shows, such a group cannot be polycyclic.

\section{Finitely presented groups \texorpdfstring{$G$}{Finitely presented groups} with \texorpdfstring{$G/\mathrm{Tor}(G)$}{torsion quotient} not finitely presented}

In~\cite[Question~1]{chvytwo} and~\cite[19.24]{kourovka} it was
asked whether there is a finitely presented group $G$ so
that $G/\mathrm{Tor}(G)$ is recursively presented but not finitely
presented.  We give two different constructions of such
a group, one of which is soluble and the other of which
is virtually torsion-free.

\begin{constr}
First we construct a soluble group $G$.  In this case one
starts with Abels' group $H$ which is a finitely presented
torsion-free soluble group of $4\times 4$ upper-triangular
matrices over $\zz[1/p]$ for a fixed prime $p$, such
that the centre $Z=Z(H)$ is isomorphic to the additive group
$\zz[1/p]$~\cite{abels}.  Let $C$ be an infinite cyclic
subgroup with $C\leq Z$, and let $G=H/C$.  The matrix
representation of $H$ gives rise to a sequence $Z=H_1<H_2<H_3<H_4=H$
of normal subgroups of $H$ with each $H_{i+1}/H_i$ torsion-free
abelian, and it follows that $H/Z$ is torsion-free.  Hence
the subgroup $\mathrm{Tor}(G)$ is equal to $Z/C\cong\zz[1/p]/\zz$.  This subgroup
of $H/C$ is central and not finitely generated.  It follows
that $G/\mathrm{Tor}(G)\cong H/Z$ is a non-finitely presented soluble group.
Since $Z$ is clearly recursively generated, $H/Z$ is recursively presented.
\end{constr}

The virtually torsion-free examples rely on Bestvina-Brady
groups~\cite{bb}, so we start by recalling these groups.
Let $L$ be a finite flag simplicial complex, i.e., a
clique complex.  The right-angled Artin group $A_L$ is
the group with generators the vertices of $L$ subject
only to the relations that the ends of each edge of $L$
commute.  There is a homomorphism $f_L:A_L\rightarrow \zz$
that sends each generator to $1\in\zz$, and the kernel of
$f$ is the Bestvina-Brady group $BB_L$.  Bestvina-Brady
show that $BB_L$ is finitely generated if and only if $L$ is
connected, and finitely presented if and only if $L$ is
simply-connected~\cite{bb}.

\begin{constr}
Let $L$ be a finite connected flag complex with non-trivial
finite fundamental group $\pi$, and let $K$ be the
universal cover of $L$.  The action of $\pi$ on $K$ by deck
transformations induces an action on $A_K$ and on $BB_K$.  The
group $G$ will be the semidirect product $G=BB_K\rtimes \pi$.  Let
$\overline G=A_K\rtimes \pi$, and note that $G$ is a normal subgroup
of $\overline G$ with ${\overline G}/G\cong \zz$.  It follows
that every torsion element of $\overline{G}$ is in $G$.  It
is easy to see that ${\overline{G}}/\mathrm{Tor}({\overline G})= A_L$,
and hence $G/\mathrm{Tor}(G)=BB_L$, a non-finitely presented subgroup
of the group $A_L$.  Thus the group $G$ contains infinitely
many conjugacy classes of subgroups isomorphic to $\pi$ (see also~\cite[Thm.~3]{vfg} for another proof of this fact).  
An explicit presentation for $G/\mathrm{Tor}(G)=BB_L$ with generators
the directed edges of $L$ is given in~\cite{dicks-leary}.
\end{constr}


\begin{thebibliography}{99}

\bibitem{abels} H. Abels, \emph{An example of a finitely presented
solvable group}. In \emph{Homological group theory (Proc. Sympos.
Durham, 1977)}, London Math. Soc. Lecture Notes \textbf{36}, 205--211,
Cambridge Univ. Press (Cambridge) (1979).

\bibitem{bb} M. Bestvina and N. Brady, \emph{Morse theory and
finiteness properties of groups}. Invent. Math. \textbf{129}
(1997) 445--470.

\bibitem{chvyone} M. Chiodo and R. Vyas, \emph{A note on torsion length}. Comm. Alg. \textbf{43} (2015) 4825--4835.  

\bibitem{chvytwo} M. Chiodo and R. Vyas, \emph{Torsion, torsion
length and finitely presented groups}. J. Group Theory \textbf{21}
(2018) 949--971.

\bibitem{dicks-leary} W. Dicks and I. J. Leary, \emph{Presentations
for subgroups of Artin groups}. Proc. Amer. Math. Soc. \textbf{127}
(1999) 343--348.

\bibitem{kourovka} E.I. Khukhro and V.D. Mazurov, editors, \emph{Unsolved problems in group theory, the Kourovka notebook}. 19th edition (2018). Sobolev Institute of Mathematics, Russian Academy of Sciences, Siberian Branch, Novosibirsk.  

\bibitem{vfg} I. J. Leary and B. E. A. Nucinkis, \emph{Some groups
of type $VF$}. Invent. Math. \textbf{151} (2003), 135--165.

\bibitem{Serre-Galois} J.-P. Serre, 
\emph{Galois cohomology.}
Translated from the French by Patrick Ion and revised by the author. \emph{Springer-Verlag, Berlin,} 1997. x+210 pp.
\end{thebibliography}
\end{document}